\newtheorem{theorem}{Theorem}
\newtheorem{lemma}{Lemma}
\newtheorem*{proposition*}{Proposition}
\newtheorem{definition}{Definition}
\newtheorem{remark}{Remark}
\renewcommand{\subsection}{\@startsection{subsection}{2}{0mm}{-\baselineskip}{-5pt}{\it \bf}}
\title{Morita equivalent unital locally matrix algebras}
\author{Oksana Bezushchak and Bogdana Oliynyk }
\thanks{The second author was partially supported by the grant for scientific researchers of the ``Povir u sebe'' Ukranian Foundation.}
\begin{document}
	
	\maketitle
	
	\address{Faculty of Mechanics and Mathematics,
	Taras Shevchenko National University of Kyiv,
	Volodymyrska, 60, Kyiv 01033, Ukraine \\ Department of Mathematics, National University
		of Kyiv-Mohyla Academy, Skovorody St. 2, Kyiv,
		04070, Ukraine                     }
	
	\email{bezusch@univ.kiev.ua, oliynyk@ukma.edu.ua}

	\keywords{Keyword: locally matrix algebra, Steinitz number,  Morita equivalence}               
	
	\subjclass{2010}{Mathematics Subject Classification: 03C05, 03C60}
	

	
	\begin{abstract}
		We describe Morita equivalence of unital locally matrix algebras in terms of their Steinitz parametrization. Two countable dimensional unital locally matrix algebras are Morita equivalent if and only if their Steinitz numbers are rationally connected. For an arbitrary uncountable dimension $\alpha$ and an arbitrary not locally finite Steinitz number $s$ there exist unital locally matrix algebras $A$, $B$ such that $\dim_{F}A=\dim_{F}B=\alpha$,  $\mathbf{st}(A)=\mathbf{st}(B)=s$, however, the algebras  $A$, $B$ are not Morita equivalent. 
		
	\end{abstract}

\section{Introduction}

 Let $F$ be a ground  field. Throughout the  paper   we consider unital associative $F$--algebras. An algebra $A$ with a unit $1_A$ is called a {\it unital locally matrix algebra} if an arbitrary finite collection of elements $a_1,$ $\ldots,$ $a_s \in A$  lies in a subalgebra $B$,  $1_A\in B \subset A$,  that is  isomorphic to a matrix algebra $M_n(F),$ $n\geq 1.$

The idea of parametrization of unital locally matrix algebras   with Steinitz numbers was introduced by J.~G.~Glimm \cite{Glimm}. Diagonal locally simple Lie algebras of countable dimension were parametrized with Steinitz numbers by  A.~A.~Baranov and A.~G.~Zhilinskii  in \cite{Baranov2}, \cite{Baranov1}.
The extension of these results to regular relation structures was done   in \cite{Sushch2}.

In this paper we apply Steinitz parametrisation  to Morita equivalence classes of unital locally matrix algebras.
We show that two countable dimensional unital locally matrix algebras are Morita equivalent if and only if  their Steinitz numbers  are rationally connected. This result does not extend to the uncountable case. Moreover, for an arbitrary uncountable dimension $\alpha$ and an arbitrary not locally finite  Steinitz number $s$ there exist  unital locally matrix algebras $A$, $B$ such that $\dim_{F}A=\dim_{F}B=\alpha$,  $\mathbf{st}(A)=\mathbf{st}(B)=s$, however, the algebras  $A$, $B$ are not Morita equivalent.

\section{Preliminaries}


Let $ \mathbb{P} $ be the set of all primes and $ \mathbb{N} $ be the set of all positive integers. A
  {\it Steinitz } or ``supernatural'' number (see \cite{ST})  is an infinite formal
product of the form
\begin{equation}\label{1}
 \prod_{p\in \mathbb{P}} p^{r_p} \ , \end{equation}
where   $ r_p\in  \mathbb{N} \cup \{0,\infty\}$ for all $p\in \mathbb{P}.$
The product of two Steinitz numbers
$$ \prod_{p\in \mathbb{P}} p^{r_p} \ \text{ and } \  \prod_{p\in \mathbb{P}} p^{k_p}  $$
 is a Steinitz number
$$ \prod_{p\in \mathbb{P}} p^{r_p+k_p}  \ ,  $$
where we assume, that  $t+\infty=\infty+t=\infty+\infty=\infty$ for all non negative integers $t$.

Denote by $ \mathbb{SN} $ the set of all Steinitz numbers. Note, that the set of all positive integers $ \mathbb{N}$ is a subset of $\mathbb{SN}$.

 A  Steinitz number \eqref{1} is called {\it locally finite} if  $r_p \neq \infty $ for any $p \in  \mathbb{P} $. The numbers  $ \mathbb{SN}\setminus   \mathbb{N}  $  are called {\it infinite}  Steinitz numbers.

J.~G.~Glimm \cite{Glimm} parametrised  countable dimensional locally matrix algebras   with Steinitz numbers. In \cite{BezOl}, \cite{BezOl_2} we  studied Steinitz numbers of unital locally matrix algebras of arbitrary dimensions.

Let $A$ be an infinite dimensional  locally matrix algebra with a unit $1_A$ over a field $F$ and let $D(A)$ be  the  set of all positive integers $n$ such that  there is a subalgebra  $A'$, $1_A \in A'\subseteq A$, $A' \cong  M_n(F)$.

\begin{definition}
\label{St_number}
The  least common multiple of the set $D(A)$ is called the Steinitz  number $\mathbf{st}(A)$  of the algebra $A$.
\end{definition}

Given two unital locally matrix algebras $A$ and $B$  their tensor product $A \otimes_F B$ is a unital  locally matrix algebra and
$ \mathbf{st}(A \otimes_F B)=\mathbf{st}(A) \cdot \mathbf{st}(B)$ (see  \cite{BezOl}, \cite{BezOl_2}). In particular, a matrix algebra $M_k(A)$ is a unital locally matrix algebra and $\mathbf{st}(M_k(A))= k \cdot \mathbf{st}(A).$

\begin{theorem}[\cite{Glimm}, see also \cite{Sushch2}]
	\label{teorBOS}
If $A$ and $B$  are unital locally matrix algebras of countable dimension then $A$ and $B$ are isomorphic if and only if  $\mathbf{st}(A)=\mathbf{st}(B)$.
\end{theorem}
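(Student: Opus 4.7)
The easy direction is straightforward: if $A\cong B$ then $D(A)=D(B)$, hence $\mathbf{st}(A)=\mathbf{st}(B)$. The substantive direction, that equal Steinitz numbers force isomorphism, I would prove by a back-and-forth construction on exhausting chains of matrix subalgebras.

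The first preparatory step is to characterise $D(A)$ arithmetically: I claim $D(A)=\{n\in\mathbb{N}:n\mid \mathbf{st}(A)\}$. Divisor-closedness follows from the standard unital embedding $M_m(F)\hookrightarrow M_n(F)$ whenever $m\mid n$. For closedness under least common multiples, take unital subalgebras $A_1\cong M_{n_1}(F)$ and $A_2\cong M_{n_2}(F)$ of $A$, pick finite generating sets, and apply the local matrix property to their union: this yields a unital subalgebra $A'\cong M_m(F)$ containing $A_1\cup A_2$, whence $n_1,n_2\mid m$ and therefore $\mathrm{lcm}(n_1,n_2)\in D(A)$. Consequently $\mathbf{st}(A)=\mathbf{st}(B)$ implies $D(A)=D(B)$.

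The back-and-forth goes as follows. Fix countable spanning sets $\{x_i\}$ of $A$ and $\{y_i\}$ of $B$. I will construct inductively unital matrix subalgebras $P_1\subseteq P_2\subseteq\cdots\subseteq A$ and $Q_1\subseteq Q_2\subseteq\cdots\subseteq B$ with $P_i\cong Q_i\cong M_{n_i}(F)$, and compatible isomorphisms $\phi_i\colon P_i\to Q_i$, so that $x_i\in P_{2i}$ and $y_i\in Q_{2i+1}$. At an odd step I use the local matrix property of $A$ to enclose $P_i\cup\{x_k\}$ in some unital $P_{i+1}\cong M_{n_{i+1}}(F)$; automatically $n_i\mid n_{i+1}$. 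To mirror this on the $B$ side, I use $n_{i+1}\in D(A)=D(B)$ to find a unital $R\cong M_{n_{i+1}}(F)$ in $B$, then apply the local matrix property to a finite generating set of $Q_i\cup R$ to obtain a unital $W\cong M_\ell(F)\subseteq B$ containing both, with $n_{i+1}\mid\ell$. Inside $W$, using the tensor decomposition $W\cong Q_i\otimes C_W(Q_i)$ and the fact that $M_{n_{i+1}/n_i}(F)$ embeds unitally in $C_W(Q_i)\cong M_{\ell/n_i}(F)$, I extract an intermediate unital $Q_{i+1}\cong M_{n_{i+1}}(F)$ with $Q_i\subseteq Q_{i+1}\subseteq W$. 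Finally, to extend $\phi_i$ to an isomorphism $\phi_{i+1}\colon P_{i+1}\to Q_{i+1}$, I note that the centralizer decompositions $P_{i+1}=P_i\otimes C_{P_{i+1}}(P_i)$ and $Q_{i+1}=Q_i\otimes C_{Q_{i+1}}(Q_i)$ reduce the extension problem to choosing any isomorphism between the two centralizers, both of which are isomorphic to $M_{n_{i+1}/n_i}(F)$. Even steps are symmetric.

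Passing to the limit, $\phi=\bigcup_i\phi_i$ is a well-defined algebra homomorphism $A\to B$; it is surjective because each $y_i$ lies in some $Q_j$, and injective because each $P_i$ is simple and the $\phi_i$ are isomorphisms. I expect the main technical obstacle to be the extension step for $\phi_i$: one must simultaneously control the ambient matrix sizes $n_i$, keep the chains exhausting both algebras, and verify that the chosen isomorphism on the centralizer produces a map compatible with the previously fixed $\phi_i$. The cleanest way to manage this is through the tensor/centralizer decomposition above, which makes the extension essentially canonical once $P_i$, $Q_i$, $P_{i+1}$, and $Q_{i+1}$ are fixed; Skolem--Noether can be invoked to conjugate away any discrepancy if one prefers to work inside a fixed model of $M_{n_{i+1}}(F)$.
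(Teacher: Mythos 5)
Your argument is correct and is essentially the classical proof of this theorem; the paper itself cites the result from Glimm and does not reprove it, so there is no in-paper argument to compare against. Your two key ingredients --- the identification $D(A)=\{n\in\mathbb{N}: n\mid \mathbf{st}(A)\}$ via the double centralizer decomposition $M_m(F)\cong M_{n}(F)\otimes M_{m/n}(F)$, and the back-and-forth construction of compatible chains of unital matrix subalgebras with the extension of $\phi_i$ handled on the centralizer tensor factor --- are exactly the standard route, and the steps you flag as delicate (divisibility $n_i\mid n_{i+1}$, exhaustion of both spanning sets, compatibility of the extended isomorphism) are all handled correctly.
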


Let $A$ be an  algebraic system. The universal elementary theory $UTh(A)$ consists of universal closed  formulas (see \cite{Malcev}) that are valid on $A$. The systems $A$ and $B$ of the same signature are universally equivalent if $UTh(A)=UTh(B)$.

In \cite{BezOl} we showed that for unital locally matrix algebras $A,$ $B$ of dimension $> \aleph_0$  the equality  $\mathbf{st}(A)=\mathbf{st}(B)$ does not necessarily  imply that $A$ and $B$ are isomorphic. However, $\mathbf{st}(A)=\mathbf{st}(B)$ is equivalent to  $A,$ $B$ being universally equivalent.

\section{Morita equivalence}

\begin{definition}
\label{Morita}
Two unital algebras $A,$ $B$ are called Morita equivalent if categories of their left modules are equivalent.
\end{definition}

Let $e\in A$ be an idempotent. We refer to the subalgebra $e A e$  as a {\it corner} of the algebra $A.$ An idempotent $e\in A$ is said to be {\it full} if $AeA=A.$ K.Morita \cite{Morita} (see also \cite{Lam}) proved that the algebras $A,$ $B$ are Morita equivalent if and only if there exists $n \geq 1$  and a full idempotent $e$ in the matrix algebra $M_n (A) $ such that $B \cong e M_n (A) e.$ Thus $B$ is isomorphic to a corner of the algebra $M_n(A).$

We say that a property $P$ is \emph{Morita invariant} if any two Morita equivalent algebras do satisfy or do not satisfy $P$ simultaneously.

An algebra $A$ is a tensor product of finite dimensional matrix algebras if $$A\cong \otimes_{i\in I} A_i, \ \ A_i\cong M_{n_i}(F), \ n_i \geq 1.$$  Every tensor product of finite dimensional matrix algebras is a locally matrix algebra.   G.~K\"{o}the  \cite{Koethe} showed that the reverse is true for countable dimensional algebras. A.G.Kurosh \cite{Kurosh} (see also \cite{BezOl_2}, \cite{Kurochkin}) constructed examples of locally matrix algebras that do not decompose into a tensor product of finite dimensional matrix algebras.

\begin{lemma} 	\label{1L}
\begin{enumerate}
  \item[$(1)$] Being a locally matrix algebra is a Morita invariant property.
  \item[$(2)$] Being a  tensor product of finite dimensional matrix algebras is a Morita invariant property.
\end{enumerate}
\end{lemma}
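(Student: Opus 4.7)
The plan for both parts rests on the Morita characterization quoted in the excerpt: $B$ is Morita equivalent to $A$ iff $B\cong eM_n(A)e$ for some $n\geq 1$ and some full idempotent $e\in M_n(A)$. Since Morita equivalence is an equivalence relation, I only need to verify that each of the two properties is preserved when passing from $A$ to $eM_n(A)e$ (the converse direction is obtained by swapping the roles of $A$ and $B$). In both cases the passage to $M_n(A)\cong M_n(F)\otimes A$ is transparent, so the real content is that a corner of $M_n(A)$ at an idempotent $e$ inherits the property from $A$.

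For part (1), given a finite collection $b_1,\dots,b_s\in B=eM_n(A)e$, I would collect the $n^2$ matrix entries of each of $e,b_1,\dots,b_s$, together with $1_A$, into a finite subset $S\subseteq A$. Applying the locally matrix property of $A$ to $S$ yields a subalgebra $A'\subseteq A$ with $1_A\in A'$, $A'\cong M_k(F)$ and $S\subseteq A'$. Then $M_n(A')\cong M_{nk}(F)$ is a finite-dimensional matrix subalgebra of $M_n(A)$ containing $e$ and each $b_i$, and its corner $eM_n(A')e$ is a subalgebra of $B$ that contains $1_B=e$ and all the $b_i$. Since any corner of a finite-dimensional matrix algebra at an idempotent of rank $r$ is again isomorphic to $M_r(F)$, this exhibits the required matrix subalgebra inside $B$.

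For part (2), I would write $A\cong\bigotimes_{i\in I}M_{n_i}(F)$ and combine this with $M_n(A)\cong M_n(F)\otimes A$ to express $M_n(A)$ itself as a tensor product $\bigotimes_{i\in I\cup\{\ast\}}M_{m_i}(F)$ (with $m_{\ast}=n$). The main step is then the observation that the idempotent $e\in M_n(A)$ lies in some finite subtensor product $T=\bigotimes_{i\in J}M_{m_i}(F)$ with $J$ finite, because the infinite tensor product is by construction the directed union of its finite subtensor products. Setting $T'=\bigotimes_{i\notin J}M_{m_i}(F)$ one obtains $M_n(A)\cong T\otimes T'$ with $e$ identified with $e\otimes 1_{T'}$, and therefore $eM_n(A)e\cong(eTe)\otimes T'$. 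As $T$ is a finite-dimensional matrix algebra, so is its corner $eTe$, and the right-hand side is visibly a tensor product of finite-dimensional matrix algebras. This localization of $e$ inside a finite subtensor product is the only nontrivial step in the whole argument, and the rest is bookkeeping around Morita's structure theorem.
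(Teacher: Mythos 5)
Your argument is correct and follows essentially the same route as the paper: invoke Morita's characterization $B\cong eM_n(A)e$, pass to $M_n(A)$, and analyze the corner, with part (2) in particular matching the paper's proof step for step (localizing $e$ in a finite subtensor product $M_n(F)\otimes(\otimes_{i\in I_0}A_i)$ and splitting off the remaining factors). The only difference is in part (1), where the paper simply cites Dixmier's result that a corner of a locally matrix algebra is again locally matrix, whereas you prove that fact directly by trapping the matrix entries of $e,b_1,\dots,b_s$ in a subalgebra $A'\cong M_k(F)$ --- a correct and self-contained substitute for the citation.
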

\begin{proof} Let algebras $A,$ $B$ be Morita equivalent. Then there exists $n \geq 1$ and a full idempotent $e\in M_n (A)$ such that $B\cong e M_n (A) e.$  If the algebra $A$ is locally matrix then so is the matrix algebra $M_n (A).$  J.Dixmier \cite{Dixm} showed that a corner of a locally matrix algebra is a locally matrix algebra. Hence $B$ is a locally matrix algebra.

Now suppose that $A\cong \otimes_{i\in I} A_i,$ $ A_i\cong M_{n_i}(F),$ $n_i \geq 1.$  Then  $$M_n(A)\cong M_n (F)\otimes_F A \cong M_n (F)\otimes (\otimes_{i\in I} A_i ). $$  There exists a finite subset $I_0 \subset I,$ $\mid I_0 \mid < \infty,$ such that $e \in M_n (F)\otimes (\otimes_{i\in I_0} A_i ). $  As above, the corner $e ( M_n (F)\otimes (\otimes_{i\in I_0} A_i )) e $ is a  matrix algebra. Hence, $$B \cong e M_n(A) e \cong e ( M_n (F)\otimes (\otimes_{i\in I_0} A_i )) e \otimes  (\otimes_{i\in I\setminus I_0} A_i ) , $$  which  completes the proof of the Lemma.
\end{proof}

\begin{definition}
	\label{rat_connec}
We say that nonzero  Steinitz numbers $s_1,$ $s_2$ are rationally connected if there  exists a rational number $q\in \mathbb{Q}$ such that  $s_2= q \cdot s_1 .$
\end{definition}

\begin{theorem} \label{teor_1}
\begin{enumerate}
  \item[$1)$] If  unital locally matrix algebras $A,$ $B$ are Morita equivalent then their Steinitz numbers  $ \mathbf{st}(A),$ $\mathbf{st}(B)$ are rationally connected.
  \item[$2)$] If the locally matrix algebras $A,$ $B$ are  countable dimensional then they are  Morita equivalent if and only if  $\mathbf{st}(A),$ $\mathbf{st}(B)$  are rationally connected.
  \item[$3)$] For an arbitrary not locally finite Steinitz number $s$ there exist not Morita equivalent locally matrix algebras $A,$ $B$ of arbitrary uncountable dimensions such that  $ \mathbf{st}(A)=\mathbf{st}(B).$
  \item[$4)$] For a countable dimensional locally matrix algebra $A$ the Morita equivalence class of $A$ is  countable up to isomorphism. For a  locally matrix algebra of arbitrary dimension the Morita equivalence class is countable up to universal equivalence.
\end{enumerate}
\end{theorem}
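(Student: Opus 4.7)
The plan is to handle the four parts sequentially, using Morita's theorem together with the Steinitz parametrisation and the tools established in Section~3.

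For part (1), I start from Morita's theorem: $B \cong e M_n(A) e$ for some full idempotent $e \in M_n(A)$. Since $M_n(A)$ is a unital locally matrix algebra, there is a unital matrix subalgebra $S \cong M_m(F)$ of $M_n(A)$ that contains $e$. Let $r$ be the rank of $e$ inside $S$, so $eSe \cong M_r(F)$. For any unital matrix subalgebra $T$ of $M_n(A)$ containing $S$, the double centraliser theorem applied inside a matrix algebra gives $T \cong S \otimes_F C_T(S)$, with $C_T(S)$ itself a matrix algebra; hence $eTe \cong eSe \otimes_F C_T(S)$ is again a matrix algebra whose dimension satisfies $\sqrt{\dim_F eTe} = (r/m)\sqrt{\dim_F T}$. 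As $T$ runs over a cofinal family of matrix subalgebras of $M_n(A)$ containing $S$, the corresponding sizes $\sqrt{\dim_F T}$ are cofinal in the divisors of $n \cdot \mathbf{st}(A)$ that are multiples of $m$, so taking the least common multiple yields $\mathbf{st}(B) = (r/m) \cdot \mathbf{st}(M_n(A)) = (rn/m) \cdot \mathbf{st}(A)$, which is rationally connected to $\mathbf{st}(A)$.

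For part (2), write the rational multiplier as $q = m/n$ with $m,n \in \mathbb{N}$, so $n \cdot \mathbf{st}(B) = m \cdot \mathbf{st}(A)$, i.e. $\mathbf{st}(M_n(B)) = \mathbf{st}(M_m(A))$. Both $M_n(B)$ and $M_m(A)$ are unital locally matrix algebras of countable dimension, so Glimm's theorem gives $M_n(B) \cong M_m(A)$. Since $A$ is Morita equivalent to $M_m(A)$ and $B$ is Morita equivalent to $M_n(B)$, the algebras $A$ and $B$ are Morita equivalent.

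For part (3), I use Lemma~1\,(2) as the obstruction: it suffices, for each uncountable cardinal $\alpha$ and each not locally finite Steinitz number $s$, to exhibit two unital locally matrix algebras $A,B$ of dimension $\alpha$ with $\mathbf{st}(A) = \mathbf{st}(B) = s$, where $A$ is a tensor product of finite dimensional matrix algebras and $B$ is not. For $A$ one takes a countable dimensional tensor product realising $s$ and tensors it with an $\alpha$-dimensional locally matrix algebra of Steinitz number $1$ (a direct limit of diagonally embedded copies of $F$) so as to inflate the dimension without changing $s$. For $B$, the non-local-finiteness of $s$ provides a prime $p$ with $p^\infty \mid s$, and the Kurosh-type constructions from \cite{Kurosh}, \cite{BezOl_2} along that prime produce a locally matrix algebra of Steinitz number $s$ that is not a tensor product of finite dimensional matrix algebras; adjusting the index set of the construction yields the required dimension $\alpha$. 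The simultaneous control of dimension and non-decomposability is the step I expect to be the main obstacle.

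For part (4), part (1) shows that the Steinitz number of any algebra Morita equivalent to $A$ lies in the set $\{q \cdot \mathbf{st}(A) : q \in \mathbb{Q}_{>0}\} \cap \mathbb{SN}$, which is countable. In the countable dimensional case Glimm's theorem gives a single isomorphism class per Steinitz number, so the Morita equivalence class of $A$ is countable up to isomorphism. In the general case the result of \cite{BezOl} stating that equality of Steinitz numbers is equivalent to universal equivalence replaces Glimm's theorem, and countability of the Morita equivalence class up to universal equivalence follows.
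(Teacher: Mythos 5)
Your overall strategy coincides with the paper's: part (1) via the relative rank of the idempotent (you inline the proof of Lemma \ref{2L} using the double centraliser theorem rather than citing it, but the computation $\mathbf{st}(B)=r(e)\cdot n\cdot\mathbf{st}(A)$ is the same), part (2) via $\mathbf{st}(M_n(B))=\mathbf{st}(M_m(A))$ and Glimm's theorem, part (3) via the Morita invariance of being a tensor product of finite dimensional matrix algebras (Lemma \ref{1L}(2)), and part (4) via countability of the set $\{q\cdot\mathbf{st}(A):q\in\mathbb{Q}_{>0}\}\cap\mathbb{SN}$ combined with Glimm's theorem, respectively with the universal-equivalence result of \cite{BezOl}. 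Parts (1), (2) and (4) are correct as written. In part (3), relying on \cite{Kurosh}, \cite{BezOl_2} for the non-decomposable algebra of prescribed dimension $\alpha$ and Steinitz number $s$ is exactly what the paper does, so that is not a gap.

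There is, however, a genuine error in your construction of the \emph{decomposable} algebra in part (3). You propose to inflate the dimension by tensoring a countable dimensional realisation of $s$ with ``an $\alpha$-dimensional locally matrix algebra of Steinitz number $1$.'' No such algebra exists: if $\mathbf{st}(C)=1$ then the only unital matrix subalgebra of $C$ is $F\cdot 1_C$ itself, and since every finite subset of $C$ must lie in such a subalgebra, $C=F\cdot 1_C$ is one-dimensional. (A ``direct limit of diagonally embedded copies of $F$'' is again just $F$.) The correct way to inflate the dimension without changing the Steinitz number is precisely where the hypothesis that $s$ is not locally finite enters on this side of the argument as well: choose a prime $p$ with $p^{\infty}$ dividing $s$ and take a tensor product $\otimes_{i\in I}M_{n_i}(F)$ in which the index set has cardinality $\alpha$, the factor $M_p(F)$ occurs $\alpha$ times, and the formal product of the $n_i$ equals $s$. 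This has dimension $\alpha$ and Steinitz number $s$, and is by construction a tensor product of finite dimensional matrix algebras. With this replacement your part (3) agrees with the paper's argument; for a locally finite $s$ no such padding is possible, which is why the hypothesis is needed.
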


\begin{remark} Countability of Morita equivalence classes of finitely presented algebras was discussed in \emph{\cite{Zel_Alanm}},  \emph{\cite{Berest}},  \emph{\cite{Futorny}}.\end{remark}

Let $A$ be a  locally matrix algebra, let $a\in A.$ There exists a subalgebra $1\in A_1 < A, $ $a\in A_1,$  such that $A_1 \cong M_n(F),$ $n\geq 1.$  Let $r$ be the rang of the matrix $a$ in $A_1.$ Let $$r(a)= \frac{r}{n}, \ \ 0\leq r(a) \leq 1.$$ V.M.Kurochkin \cite{Kurochkin}  noticed that the number $r(a)$  does not depend on a choice of the subalgebra $A_1.$  We will call $r(a)$ the {\it relative rang} of the element $a.$

\begin{lemma} 	\label{2L} Let $e$ be an idempotent of a  locally matrix algebra $A.$ Then $ \mathbf{st}(e A e)=r(e)\cdot \mathbf{st}(A).$
\end{lemma}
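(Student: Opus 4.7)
The plan is to identify $\mathbf{st}(eAe)$ as the Steinitz least common multiple of the matrix sizes $n\,r(e)$, where $n$ ranges over dimensions of matrix subalgebras of $A$ containing both $1_A$ and $e$, and then to compare this quantity with $\mathbf{st}(A)$. First I note that $eAe$ is itself a unital locally matrix algebra, with unit $e$, by the Dixmier result invoked in part $(1)$ of Lemma~\ref{1L}, so $\mathbf{st}(eAe)$ is well defined. The key observation driving everything is this: if $A_1\subseteq A$ is a matrix subalgebra, $A_1\cong M_n(F)$ with $1_A,e\in A_1$, then $eA_1e$ is itself a matrix algebra over $F$ whose size equals the ordinary rank of $e$ in $A_1$, i.e.\ $n\,r(e)$ by the definition of $r(e)$. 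Hence $eA_1e\cong M_{n r(e)}(F)$ is a unital subalgebra of $eAe$, and $n\,r(e)\in D(eAe)$.

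Let $T\subseteq D(A)$ denote the set of those $n$ admitting such an $A_1$. The preceding observation gives $\{n\,r(e):n\in T\}\subseteq D(eAe)$. Conversely, given any $k\in D(eAe)$ witnessed by $C\cong M_k(F)\subseteq eAe$ (so $e\in C$), the locally matrix property of $A$ applied to a basis of $C$ produces $A_1\cong M_n(F)\subseteq A$ with $1_A\in A_1$ and $C\subseteq A_1$; in particular $e\in A_1$, so $n\in T$, and $C\subseteq eA_1e\cong M_{n r(e)}(F)$, giving $k\mid n\,r(e)$. Therefore
\begin{equation*}
\mathbf{st}(eAe)=\mathrm{lcm}\{n\,r(e):n\in T\}.
\end{equation*}

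Next I verify $\mathrm{lcm}(T)=\mathbf{st}(A)$. One direction is immediate since $T\subseteq D(A)$. For the other, fix any $B\cong M_{n_0}(F)\subseteq A$ containing $1_A$ and $e$ (such $B$ exists because $\{1_A,e\}$ is finite and $A$ is locally matrix). For each $m\in D(A)$, the locally matrix property applied to the union of a basis of $B$ and of a fixed $M_m(F)$-subalgebra produces $A_2\cong M_{n'}(F)\subseteq A$ containing both, so $m\mid n'$ and $n'\in T$. Hence $T$ is cofinal in $D(A)$ under divisibility, and $\mathrm{lcm}(T)=\mathbf{st}(A)$.

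Finally I convert the lcm. Write $r(e)=p/q$ in lowest terms; since $n\,r(e)$ is always an integer and $\gcd(p,q)=1$, we get $q\mid n$ for every $n\in T$. Setting $n=qn'$ gives $n\,r(e)=pn'$, and the identity $\mathrm{lcm}\{cn':n'\in S\}=c\cdot\mathrm{lcm}(S)$ applied to both $c=p$ and $c=q$ yields
\begin{equation*}
\mathrm{lcm}\{n\,r(e):n\in T\}=\frac{p}{q}\cdot\mathrm{lcm}(T)=r(e)\cdot\mathbf{st}(A),
\end{equation*}
completing the proof. The main obstacle is the bookkeeping in this last step: the product $r(e)\cdot\mathbf{st}(A)$ is a Steinitz number only when $q$ divides $\mathbf{st}(A)$, which holds since $q\mid n_0$ and $n_0\mid\mathbf{st}(A)$.
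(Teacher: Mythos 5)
Your proof is correct and follows essentially the same route as the paper's: both compute $\mathbf{st}(eAe)$ via the family of matrix subalgebras of $A$ containing $1_A$ and $e$, using that $e$ has rank $n\,r(e)$ in each such $M_n(F)$. The paper simply asserts the three facts you verify in detail (cofinality of that family in $D(A)$, that every matrix subalgebra of $eAe$ sits inside some $eA_1e$, and that the factor $r(e)$ pulls out of the lcm), so your write-up is a fleshed-out version of the same argument.
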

\begin{proof}  Consider the family of all  matrix subalgebras $1\in A_i < A,$ $A_i \cong M_{n_i}(F),$ $i\in I,$ such that $e\in A_i.$ Then $\mathbf{st}(A)= \text{lcm} (n_i, i\in I).$ The rang of the matrix $e$ in $A_i$ is equal to $r(e) \cdot n_i.$ Hence $$e A_i e\cong M_{r(e)\cdot n_i} (F) \ \ \text{ and } \ \ \mathbf{st}(e A e)= \text{lcm} (r(e)\cdot n_i, i\in I) =r(e)\cdot \mathbf{st}(A).$$
\end{proof}

\begin{proof}[Proof of Theorem \ref{teor_2}]\label{teor_2}

1) Let $A,$ $B$ be  locally matrix algebras that are Morita equivalent. Hence \cite{Lam} there exists $k\geq 1$ and an idempotent $e \in M_k (A) $  such that $B \cong e M_k (A) e.$  Let $r(e) $ be the relative rang of the idempotent $e$ in the locally matrix algebra $M_k (A).$ By Lemma \ref{2L}   $$ \mathbf{st}(B)= r(e)\cdot  \mathbf{st}(M_k (A))= r(e)\cdot k \cdot \mathbf{st}(A).$$ Since the number $r(e)\cdot k$ is rational it follows that the  Steinitz numbers $ \mathbf{st}(A),$ $\mathbf{st}(B)$ are rationally connected.

2) Let $A,$ $B$ be countable dimensional locally matrix algebras. Suppose that their Steinitz numbers $ \mathbf{st}(A),$ $\mathbf{st}(B)$ are rationally connected. Our aim is to prove that the algebras $A,$ $B$ are Morita equivalent. There exist integers $k,$ $l\geq 1$ such that  $ k\cdot \mathbf{st}(A)=l \cdot \mathbf{st}(B).$ Consider the  matrix algebras $M_k(A)$ and $M_l(B).$ We have $$\mathbf{st}(M_k(A))= k \cdot \mathbf{st}(A)=l \cdot \mathbf{st}(B)=\mathbf{st}(M_l(B)).$$  By Glimm's Theorem \cite{Glimm}  the algebras $M_k (A)$  and   $M_l (B)$ are isomorphic. Hence the algebras $A,$ $B$ are   Morita equivalent.

3) Let $S$ be a not locally finite Steinitz number. In \cite{BezOl_2} (see also \cite{BezOl} and \cite{Kurosh}) we showed that there exists a  locally matrix algebra $A$ of an arbitrary  uncountable dimension $\alpha$  such that $\mathbf{st}(A)=s$ and $A$ is not isomorphic to a tensor product of finite  dimensional matrix algebras. It is easy to see that there exists a  locally matrix algebra $B$ of dimension $\alpha$ such that $\mathbf{st}(B)=s$  and $B$ is  isomorphic to a tensor product of finite dimensional matrix algebras.  By Lemma \ref{1L} (2) the algebras $A,$ $B$ are not Morita equivalent.

4) For a countable dimensional locally simple algebra  $A$ all algebras in its Morita equivalence class have Steinitz numbers $q \cdot \mathbf{st}(A),$ where $0\neq q$ is a rational number, and are uniquely determined by their   Steinitz numbers. This implies that the Morita equivalence class of $A$ is  countable.

If the algebra $A$ is not necessarily countable dimensional then Steinitz numbers $q \cdot \mathbf{st}(A) $  determine  universal elementary theories of  algebras  in this class (see \cite{BezOl}). Hence the  Morita equivalence class of $A$ is countable up to elementary equivalence.  This completes  the proof of Theorem \ref{teor_2}.
\end{proof}

If nonzero Steinitz numbers $s_1,$ $s_2$ are rationally connected then it makes
sense to talk about their ratio $q=\frac{s_2}{s_1}$ which is a rational number.

For a countable dimensional locally matrix algebra $A$ its Morita equivalence class
{\it is ordered}: for algebras $A_1,$ $A_2$ in this class we say that $A_1 < A_2$ if
$$\frac{\mathbf{st}(A_1)}{\mathbf{st}(A_2)}<1.$$

\begin{lemma} Let $A_1,$ $A_2$ be countable dimensional Morita equivalent locally
matrix algebras. Then $$\frac{\mathbf{st}(A_1)}{\mathbf{st}(A_2)}<1 \text{ if and only if } A_1 \text{ is isomorphic
to a proper corner of } A_2.$$
\end{lemma}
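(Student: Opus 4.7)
My plan is a two-way argument, relying on Lemma \ref{2L} and Glimm's Theorem \ref{teorBOS}. For the reverse direction, if $A_1 \cong eA_2e$ for an idempotent $e \in A_2$ with $e \neq 1_{A_2}$, I would invoke Lemma \ref{2L} directly. The observation is that an idempotent $f$ in a locally matrix algebra has relative rank $1$ iff it equals the unit (a rank-$n$ idempotent in $M_n(F)$ is invertible, hence equal to the identity), so a proper idempotent $e$ has $r(e) < 1$, whence $\mathbf{st}(A_1)/\mathbf{st}(A_2) = r(e) < 1$.

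For the forward direction, starting from $\mathbf{st}(A_1)/\mathbf{st}(A_2) = q < 1$, I would first use Theorem \ref{teor_1}(1) to conclude that $q$ is a positive rational, and write $q = k/n$ in lowest terms with $0 < k < n$ and $\gcd(k,n) = 1$. The first substantive step is to show $n \mid \mathbf{st}(A_2)$: rewriting the relation as $n\cdot\mathbf{st}(A_1) = k\cdot\mathbf{st}(A_2)$ and comparing $p$-adic valuations for each prime $p \mid n$, the coprimality of $k$ and $n$ forces $v_p(\mathbf{st}(A_2)) = v_p(n) + v_p(\mathbf{st}(A_1)) \geq v_p(n)$ (with $\infty + t = \infty$ handled as usual).

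Next I would exhibit a unital copy of $M_n(F)$ inside $A_2$. Since $\mathbf{st}(A_2) = \text{lcm}(D(A_2))$, some $m \in D(A_2)$ is divisible by $n$; the associated subalgebra $M_m(F) \subseteq A_2$ contains $1_{A_2}$, and the decomposition $M_m(F) \cong M_n(F) \otimes M_{m/n}(F)$ supplies a unital subalgebra $A_2' \cong M_n(F)$. Choosing an idempotent $e \in A_2'$ of rank $k$, say $e = e_{11} + \cdots + e_{kk}$, gives $r(e) = k/n$, and $e \neq 1_{A_2}$ because $k < n$.

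To close, I would apply Lemma \ref{2L} to obtain $\mathbf{st}(eA_2e) = (k/n)\cdot\mathbf{st}(A_2) = \mathbf{st}(A_1)$. The corner $eA_2e$ is a locally matrix algebra by Dixmier's theorem (already used in the proof of Lemma \ref{1L}) and is countable dimensional, since $A_2$ is; Glimm's Theorem \ref{teorBOS} then yields $eA_2e \cong A_1$, exhibiting $A_1$ as a proper corner of $A_2$. The main obstacle I anticipate is precisely the passage from $q < 1$ to $n \mid \mathbf{st}(A_2)$ and then to a unital $M_n(F) \subseteq A_2$; once that is secured, Lemma \ref{2L} and Glimm's classification do the rest essentially mechanically.
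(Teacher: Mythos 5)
Your proof is correct and takes essentially the same route as the paper: one direction is Lemma \ref{2L} applied to a proper idempotent, and for the converse both arguments write the ratio as a fraction $k/n$ in lowest terms, locate a unital copy of $M_n(F)$ inside $A_2$, take a rank-$k$ idempotent $e$ in it, and combine Lemma \ref{2L} with Glimm's theorem to identify $A_1$ with the proper corner $eA_2e$. The paper merely packages the idempotent through the decomposition $A_2\cong M_n(C)$, with $C$ the centralizer of the $M_n(F)$-copy, which you bypass; the only point worth making explicit in your write-up is that the step ``some $m\in D(A_2)$ is divisible by $n$'' relies on $D(A_2)$ being directed under divisibility (any two unital matrix subalgebras lie in a common one), a standard fact that the paper likewise uses tacitly when it asserts the existence of the subalgebra $A_2'\cong M_n(F)$.
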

\begin{proof} If $A_1\cong e A_2 e,$ where $e$ is a proper idempotent of the algebra
$A_2$ then $\mathbf{st}(A_1)=r(e) \mathbf{st}(A_2)$ by Lemma \ref{2L}. Hence $$\frac{\mathbf{st}(A_1)}{\mathbf{st}(A_2)}=r(e)<1.$$

Now let $$\frac{\mathbf{st}(A_1)}{\mathbf{st}(A_2)}= \frac{m}{n}<1,$$ where $m,$ $ n$ are relatively
prime integers. Then $n$ is a divisor of $\mathbf{st}(A_2).$ Hence the algebra $A_2$ contains
a subalgebra $1\in A_2^{'}<A_2,$ $A_2^{'}\cong M_n(F).$ Hence (see \cite{Kurosh}) $$A_2\cong A_2^{'} \otimes_F C \cong M_n (C),$$  where $C$ is the centralizer of the
subalgebra $A_2^{'}$ in $A_2.$ Consider the idempotent $e= diag(\underbrace{1,1,...,1}_m,0,...,0) \in M_n (C).$  By Lemma \ref{2L} $$ \mathbf{st}(e M_n (C) e)=\frac{m}{n} \ \mathbf{st}(A_2)= \mathbf{st}(A_1).$$  By
Glimm’s Theorem $A_1$ is isomorphic to a corner of $M_n (C),$ hence to a corner of $A_2.$
\end{proof}

\end{document}